\documentclass[draftclsnofoot,onecolumn]{IEEEtran}
\usepackage{cite}
\ifCLASSINFOpdf

\else

\fi

\usepackage{amsbsy}
\usepackage{amsmath}
\usepackage{amssymb}
\usepackage{graphicx}
\usepackage{amsfonts,amstext,mathrsfs}
\usepackage{amsthm}

\newtheorem{theorem}{Theorem}
\newtheorem{lemma}{Lemma}
\newtheorem{corollary}{Corollary}

\newtheorem{definition}{Definition}





\newcommand{\mI}{\hbox{{\bf I}}}


\newcommand{\gth}{\theta}









\newtheorem{question}{Question}[section]
\newtheorem{coro}{Corollary}[section]

\newcommand{\beq}{\begin{equation}}
\newcommand{\eeq}{\end{equation}}
\newcommand{\bea}{\begin{array}}
\newcommand{\ena}{\end{array}}
\newcommand{\bds}{\begin {itemize}}
\newcommand{\eds}{\end {itemize}}
\newcommand{\bdf}{\begin{definition}}
\newcommand{\blm}{\begin{lemma}}
\newcommand{\edf}{\end{definition}}
\newcommand{\elm}{\end{lemma}}
\newcommand{\bthm}{\begin{theorem}}
\newcommand{\ethm}{\end{theorem}}
\newcommand{\bprp}{\begin{prop}}
\newcommand{\eprp}{\end{prop}}
\newcommand{\bcl}{\begin{claim}}
\newcommand{\ecl}{\end{claim}}
\newcommand{\bcr}{\begin{coro}}
\newcommand{\ecr}{\end{coro}}
\newcommand{\bquest}{\begin{question}}
\newcommand{\equest}{\end{question}}





\begin{document}

\title{On the Non-Existence of Unbiased Estimators in Constrained Estimation Problems}
\author{Anelia Somekh-Baruch, Amir Leshem and Venkatesh Saligrama\thanks{A.\ Somekh-Baruch and Amir Leshem are with the Faculty of Engineering at Bar-Ilan University, Ramat-Gan, Israel.  Emails: somekha@biu.ac.il, leshema@biu.ac.il.  Amir Leshem's work was funded by the visiting scholar program of Boston University. Venkatesh Saligrama is with the Department of Electrical and Computer Engineering, Boston University, Email: srv@bu.edu. This material is based upon work supported in part by NSF Grants CCF: 1320566, NSF Grant CNS: 1330008 NSF CCF: 1527618,  the U.S. Department of Homeland Security, Science and Technology Directorate, Office of University Programs, under Grant Award 2013-ST-061-ED0001, by ONR Grant 50202168 and US AF contract FA8650-14-C-1728. The views and conclusions contained in this document are those of the authors and should not be interpreted as necessarily representing the social policies, either expressed or implied, of the NSF, U.S. DHS, ONR or AF.}}

\maketitle

\begin{abstract}
We address the problem of existence of unbiased constrained parameter estimators. We show that if the constrained set of parameters is compact and the hypothesized distributions are absolutely continuous with respect to one another, then there exists no unbiased estimator. Weaker conditions for the absence of unbiased constrained estimators are also specified.
We provide several examples which demonstrate the utility of these conditions. 
\end{abstract}

\begin{IEEEkeywords}
Cramer-Rao bound, unbiased estimation, Estimation theory, constrained estimators
\end{IEEEkeywords}

\vspace{7cm}

\pagebreak

\section{Introduction}

Unbiased estimation is a cornerstone of classical estimation theory and arises as an important concept in minimum variance unbiased estimation (MVUE) theory \cite{LehmannCasella98}, \cite{VanTrees2004detection}, \cite{scharf1991statistical}. Information theoretic bounds are an important tool for the evaluation of estimators for identifying theoretical performance gaps.  \cite{barankin1949locally}, \cite{bell1997extended}, \cite{ziv1969some},  \cite{weinstein1988general}. 

In this context, Cramer-Rao bound (CRB) has emerged as an important information inequality since it not only serves as a non-trivial lower bound on all unbiased estimators but is also often readily computatable particularly for unconstrained parameter estimation problems. While CRB can also be extended to bound the performance of biased estimators, these bounds are often not useful in many interesting cases. This is because they require the knowledge of the bias as a function of the parameters and are thus not agnostic to estimation scheme employed. 

In many real-world estimation problems we often encounter constraints on the parameter space in the form of side-information.  
For example in many communication systems we encounter positivity constraints, limited power constraints, bandwidth or delay constraints, circularity constraints, subspace constraints, and so on. Consequently, it makes sense to incorporate these constraints for deriving constraint dependent lower bounds.  Motivated by these scenarios a significant amount of research has been conducted on developing estimation techniques and performance bounds for constrained problems. In this context, many papers propose deriving constrained CRBs and in particular focus on bounds for unbiased estimators on account of its relative computational simplicity. This raises the question of as to the conditions under which unbiased estimators can exist in constrained scenarios. Before we describe our results we will discuss some of the related work in this context below.

CRBs for constrained parameters was introduced and studied by Gorman and
Hero \cite{gorman1990lower}. A constrained CRB on the error covariance of estimators of multi-dimensional parameters was derived. The derivation was based on a limiting form of a multi-parameter Chapman-Robbins \cite{chapman1951minimum} form of a Barankin-type bound \cite{barankin1949locally}.
It was shown that the constrained CRB is equivalent to the unconstrained CRB evaluated with a ``constrained Fisher information matrix" for the case in which there is a general smooth functional inequality constraint of the form ${\cal G}_{\theta}\leq 0$. This constrained Fisher matrix was shown to be identical to the classical
unconstrained Fisher matrix at all regular points of the constraint set, e.g., at interior points. However as noted by \cite{gorman1990lower} at non-regular points, such as points governed by equality constraints,
the constrained Fisher matrix was observed to be a rank-deficient matrix.
It was also established that functional constraints necessarily decrease the CR
bound for unbiased estimators.

Marzetta \cite{marzetta1993simple} provided a simpler proof for the case of constrained {\it unbiased} estimators.
While Gorman and Hero's derivation relied on an application of
the Cauchy-Schwarz inequality to a pair of random vectors, one of
which has a possibly singular covariance matrix, the
derivation of Marzetta avoids using the Cauchy-Schwarz inequality and it avoids pseudoinverses. But, it focuses solely on unbiased estimators. Another result of \cite{marzetta1993simple}
is a necessary condition for an unbiased estimator to achieve the
constrained CRB with equality.

A subsequent work by Stoica and Ng \cite{StoicaBoon1998} extends the constrained CRB (under differentiable,
deterministic constraints on the parameters) to the case in which the Fisher information matrix for
the unconstrained problem is not necessarily full rank. This case was not treated in previous works. 
It was assumed in \cite{StoicaBoon1998} that the estimator was unbiased and it was shown that the expression for the constrained CRB in this case depends
only on the unconstrained Fisher information matrix and a basis of the nullspace of the
constraints gradient matrix.
A necessary and sufficient
condition for the existence of the constrained CRB was also derived. 

There have been a number of follow-on works that have utilized constrained CRBs for a range of applications including sparse estimation problems~\cite{haim_eldar} and MIMO Radars~\cite{Wang2016}. 

Unfortunately, as we shall see in this paper, in many interesting cases of constrained parameter sets, an unbiased estimator does not exist. This limits the applicability of the results, mainly to cases of biased estimators, and to cases of unbiased estimators in which the constrained parameter set is a manifold without boundaries, i.e., there are no non-trivial inequality constraints.
Moreover, often, the use of the CRBs for biased estimators is quite complicated
since it requires explicit expressions for both the bias
and the gradient of the bias. 
\emph{One of the most striking consequences of the results of this paper is that if the constraint set is the set of solutions (zeros) of a set of continuous non-linear functions which is bounded, then the exists no unbiased estimator. }

This result in turn provides an explanation for the observation of \cite{gorman1990lower} that inequality constraints do not reduce the bound at internal points of the constrained parameter set. The variance is not reduced simply because there are no constrained unbiased estimators in this case. This implies that in order to use the Gorman-Hero bound, one needs to know the bias of the estimator with the constraint.

In this paper we focus on the case of an estimation problem of vector of parameter $\theta$ that lies in $\mathbb{R}^d$. 
Our first result concerns the case where $\theta$ is constrained to a set, which has at least one extreme point, $\theta_e$. In addition we assume that the distribution of the observed random variable $X$ given $\theta_e$ is absolutely continuous with respect to (w.r.t.) the distribution of $X$ given another parameter value in the constrained set. Under these assumptions we show that there exists no unbiased estimator whose range is the constrained set. 
We then specialize the result to the important case of a constrained parameter set which is compact (and not a singleton) and show that the result continues to hold in this special case. 
Furthermore, we extend the result to a case of a bounded constraint set, which can be open, and whose closure contains an extreme point and where the distribution of the observed random variable has a bounded Radon Nikodym derivative. 
We further provide examples to demonstrate that the conditions we specify are necessary and cannot be further relaxed. 

Our results imply that almost in every constrained problem that one can think of, there exists no unbiased estimator. 
This result is surprising in light of the scarcity of examples which appear in the literature for the non existence of unbiased constrained estimators (e.g.\ \cite{RomanoSiegel86}). 
In fact, the non-existence of unbiased estimators is the more common/natural case, a fact that will be defined more explicitly. 
Moreover, this fact has an important implication on CR bounds in the constrained case; 
 since the applicable case is the biased one, the CR bound which requires explicit expressions for both the bias
and the gradient of the bias might be useless in many setups. 

\section{Main Results}

We begin by presenting a theorem which specifies conditions under which an unbiased estimator of a vector of parameters does not exist.
\begin{lemma}\label{th: closed_set theorem}
 Let $  \Theta\subseteq\mathbb{R}^d$ be a set of parameters. 
 Let $\Theta' \subseteq \Theta$ be a subset, which is not a singleton, and let $P_{\theta},\; \theta\in \Theta' $ be a corresponding collection of distributions. 
 Let ${\mathbf{X}}\sim P_{\theta}$ be a random variable over some measurable space.
Suppose there exists an extreme point $\theta_e\in \Theta'$ and an additional point $\theta'\in \Theta'$, $\theta' \neq \theta$, such that $P_{\theta'}$ and $P_{\theta_e}$ are absolutely continuous w.r.t.\ one another.    
Then there exists no unbiased estimator $g(\mathbf{X})$ for $\theta$, which is measurable w.r.t.\ Lebesgue $\sigma$-algebra whose range is $\Theta'$.
\end{lemma}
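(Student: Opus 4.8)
The plan is to argue by contradiction: assume an unbiased estimator $g(\mathbf{X})$ with range $\Theta'$ exists, and produce a contradiction at the extreme point $\theta_e$. First I would record what unbiasedness says at $\theta_e$, namely $\mathbb{E}_{\theta_e}[g(\mathbf{X})]=\theta_e$. Let $\mu$ denote the law of $g(\mathbf{X})$ under $P_{\theta_e}$, i.e.\ the pushforward of $P_{\theta_e}$ through the measurable map $g$. Since the range of $g$ is $\Theta'$, $\mu$ is a Borel probability measure carried by $\Theta'$, and its barycenter (mean) is exactly $\theta_e$. This reduces the statement to a purely geometric--measure-theoretic claim about $\mu$.

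The heart of the argument is the following fact: if a probability measure $\mu$ is carried by a set $S$ and its barycenter is an extreme point $\theta_e$ of the closed convex hull $K=\overline{\mathrm{conv}}(S)$, then $\mu=\delta_{\theta_e}$. I would prove this by induction on $\dim(\mathrm{aff}\,K)$. Since an extreme point lies on the boundary of $K$, choose a proper supporting hyperplane $H=\{x:\langle n,x\rangle=c\}$ at $\theta_e$, with $\langle n,x\rangle\le c$ on $K$ and $\langle n,\theta_e\rangle=c$ (this hyperplane is proper because $\Theta'$, hence $K$, is not a singleton). By linearity the $\mu$-mean of $x\mapsto\langle n,x\rangle$ equals $\langle n,\theta_e\rangle=c$, while $\langle n,x\rangle\le c$ holds $\mu$-almost everywhere; a nonnegative function with zero mean vanishes a.e., so $\langle n,x\rangle=c$ for $\mu$-almost every $x$. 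Thus $\mu$ is carried by the proper face $F=K\cap H$, whose affine hull has strictly smaller dimension, and $\theta_e$ remains an extreme point of $F$; the induction (base case: $K$ a single point) then gives $\mu=\delta_{\theta_e}$. Applying this with $S=\Theta'$ yields $g(\mathbf{X})=\theta_e$ $P_{\theta_e}$-almost surely. One subtlety I would flag explicitly is that ``extreme point'' must be read relative to the convex hull of $\Theta'$, not merely as ``$\theta_e$ is not the midpoint of two points of $\Theta'$'': the centroid of the three cube-roots of unity in $\mathbb{R}^2$ is a nontrivial convex combination of no two of those three points, yet it is the barycenter of their uniform average, so the weaker reading would make the statement false.

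Finally I would transfer this almost-sure identity across parameters using absolute continuity. The event $B=\{g(\mathbf{X})\neq\theta_e\}$ satisfies $P_{\theta_e}(B)=0$; since $P_{\theta'}\ll P_{\theta_e}$ (only this one direction of the mutual absolute continuity is actually needed), we get $P_{\theta'}(B)=0$, i.e.\ $g(\mathbf{X})=\theta_e$ $P_{\theta'}$-almost surely. Taking expectations under $P_{\theta'}$ then forces $\mathbb{E}_{\theta'}[g(\mathbf{X})]=\theta_e$, contradicting unbiasedness at $\theta'$, namely $\mathbb{E}_{\theta'}[g(\mathbf{X})]=\theta'\neq\theta_e$. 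This contradiction proves the lemma. The main obstacle is the barycenter step: I expect the routine points (measurability of $g$ and of $B$, existence and finiteness of the mean so the expectation is well defined componentwise, and $\mu$ being a genuine Borel probability measure carried by $\Theta'$) to be straightforward, whereas the extreme-point argument requires the correct notion of extremality together with a clean supporting-hyperplane and induction treatment.
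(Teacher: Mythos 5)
Your proof is correct, and its skeleton coincides with the paper's: unbiasedness at the extreme point forces $g(\mathbf{X})=\theta_e$ almost surely under $P_{\theta_e}$, absolute continuity transfers this to $P_{\theta'}$, and unbiasedness at $\theta'$ is contradicted. The substantive difference is the middle step. The paper disposes of it in one sentence: since the right-hand side of (\ref{eq: conv comb}) is ``a convex combination of points in $\Theta'$'' and $\theta_e$ ``does not lie in any open line segment joining two points of $\Theta'$'', the law of $g(\mathbf{X})$ must be $\delta_{\theta_e}$. As you point out, that inference is invalid under the two-point notion of extremality; your supporting-hyperplane-plus-induction argument, with extremality read relative to $\overline{\mathrm{conv}}(\Theta')$, is precisely what is needed to make the step sound. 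Indeed, the subtlety you flag is not cosmetic: with the paper's literal definition the lemma itself fails. Take $\omega_1,\omega_2,\omega_3$ the cube roots of unity, $c=0.1\,\omega_1+0.2\,\omega_2+0.7\,\omega_3$, and $\Theta'=\{0,c,\omega_1,\omega_2,\omega_3\}$; let $\mathbf{X}$ take values in $\Theta'$, $g$ be the identity, $P_0$ uniform on the $\omega_i$, $P_c$ with masses $(0.1,0.2,0.7)$ on the $\omega_i$, and $P_{\omega_i}=\delta_{\omega_i}$. Then $g$ is unbiased with range $\Theta'$, the point $0$ lies on no open segment joining two points of $\Theta'$, and $P_0,P_c$ are mutually absolutely continuous, so the hypotheses hold under the weak reading while the conclusion fails; your reading excludes this example since $0$ is interior to the convex hull. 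Two further refinements in your write-up are worth keeping: only the direction $P_{\theta'}\ll P_{\theta_e}$ is actually used (the paper assumes mutual absolute continuity), and you make explicit the integrability needed for the barycenter to be well defined. In short: same route as the paper, but your proof closes a genuine gap at the one step where the published argument, as written, is unsound.
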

Note that
Theorem \ref{th: closed_set theorem} as well as the proceeding Theorem \ref{cr: closed_set theorem} do not pose restrictive assumptions on the measurable space on which the random variable $\mathbf{X}$ is defined. Hence, as a special case, it holds for $\mathbf{X}$ which is a random $N$-Vector over $\mathbb{R}^N$. 

\begin{proof}
Let $\theta_e$ be an extreme point of $\Theta'$, the existence of which is guaranteed by assumption. 

Assume that $g(\mathbf{X})$ is an unbiased estimator of $\theta$ whose range is $\Theta'$.
For notational convenience we denote by $P(g(\mathbf{X})|\theta)$ the distribution of $g(\mathbf{X})$. 
By unbiasedness we have
\beq\label{eq: unbiasedness definition}
\mathbb{E}_{P_{\theta_e}} \left(g(\mathbf{X}) |\theta_e\right) = \theta_e,
\eeq
where $\mathbb{E}_{P}(\cdot) $ denote expectation w.r.t.\ the distribution $P$. 
Therefore, by definition of the expectation we obtain
\begin{flalign}\label{eq: conv comb}
\theta_e=& \mathbb{E}_{P_{\theta_e}} \left(g(\mathbf{X})|\theta_e\right)\nonumber\\
=&\int_{\cal X} g(\mathbf{X})dP_{\theta_e} \nonumber\\
=&\int_{\Theta'} g(\mathbf{X})dP(g(\mathbf{X})|\theta_e),
\end{flalign}
where ${\cal X}$ is the alphabet of $\mathbf{X}$ and the last step follows from the assumption that the range of $g$ is $\Theta'$.

While the RHS \ of (\ref{eq: conv comb}) is a convex combination of points in $\Theta'$, 
the LHS \ of (\ref{eq: conv comb}), that is, $\theta_e$, is an extreme point of $\Theta'$.
By definition of an extreme point in $\Theta'$ as a point which does not lie in any open line segment joining two points of $\Theta'$, we conclude that
\begin{flalign}\label{eq: last repeated step}
g(\mathbf{X})\equiv \theta_e \mbox{ w.p.\ $1$ w.r.t\ $P_{\theta_e}$}.
\end{flalign}
Since by assumption, the distributions $P_{\theta'}$ and $P_{\theta_e}$ 
are absolutely continuous w.r.t.\ one another 
it follows that\footnote{To realize that (\ref{eq: sfdhiuhu}) holds, assume in negation that it does not. Thus $\exists \epsilon>0$ such that $P_{\theta'}(\{x:\; g(x)\neq \theta_e \})>\epsilon$, while from (\ref{eq: last repeated step}) we have $P_{\theta_e}(\{x:\; g(x)\neq \theta_e \})=0$, contradicting the absolute continuity of $P_{\theta'}$ and $P_{\theta_e}$ w.r.t.\ one another.} 
\begin{flalign}\label{eq: sfdhiuhu}
g(\mathbf{X})\equiv \theta_e \mbox{ w.p.\ $1$ w.r.t.\ $P_{\theta'}$},
\end{flalign}
which contradicts the unbiasedness of the estimator for $\theta'$. 
\end{proof}
The following theorem gives alternative (stricter) conditions for the non existence of an unbiased estimator. 
\begin{theorem}\label{cr: closed_set theorem}
 Let $  \Theta\subseteq\mathbb{R}^d$ be a set of parameters.
 Assume further that $\Theta' \subseteq \Theta$ is a compact set which is not a singleton, and that $P_{\theta},\; \theta\in \Theta' $ is a collection of distributions,  which are absolutely continuous w.r.t.\ one another. 
Let ${\mathbf{X}}\sim P_{\theta}$ be a random variable over some measurable space.

  Then there exists no unbiased estimator $g(\mathbf{X})$ for $\theta$, which is measurable w.r.t.\ Lebesgue $\sigma$-algebra whose range is $\Theta'$.
\end{theorem}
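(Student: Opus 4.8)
The plan is to reduce Theorem \ref{cr: closed_set theorem} to Lemma \ref{th: closed_set theorem}, which has already been established. The two hypotheses of the lemma that I must verify are: first, that the compact set $\Theta'$ possesses an extreme point $\theta_e$; and second, that there is a second point $\theta' \in \Theta'$, distinct from $\theta_e$, such that $P_{\theta'}$ and $P_{\theta_e}$ are mutually absolutely continuous. Once both are in hand, the conclusion of the lemma — namely the non-existence of an unbiased estimator with range $\Theta'$ — transfers verbatim.

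The existence of an extreme point is the first step and the only genuinely nontrivial one. Here I would invoke the Krein--Milman theorem, or more directly its classical finite-dimensional consequence: a nonempty compact convex subset of $\mathbb{R}^d$ has at least one extreme point. However, $\Theta'$ is only assumed compact, not convex, so I would instead appeal to the elementary fact that any nonempty compact subset of $\mathbb{R}^d$ has an extreme point. One clean way to produce one: pick any linear functional (say $\theta \mapsto \langle c, \theta\rangle$ for a generic direction $c$), and take the subset of $\Theta'$ on which it is maximized; this face is nonempty by compactness and continuity, and iterating this argument across a basis of directions — or simply choosing a point of $\Theta'$ of maximal norm, or lexicographically maximizing the coordinates — isolates a point that cannot be written as a proper convex combination of two other points of $\Theta'$. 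I would favor the coordinate-lexicographic maximization argument since it is self-contained and avoids citing convexity machinery that does not literally apply to a nonconvex compact set.

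The second hypothesis is immediate from the strengthened assumption of Theorem \ref{cr: closed_set theorem}: the distributions $\{P_\theta : \theta \in \Theta'\}$ are assumed mutually absolutely continuous, so in particular $P_{\theta'}$ and $P_{\theta_e}$ are mutually absolutely continuous for any choice of $\theta' \neq \theta_e$ in $\Theta'$. Since $\Theta'$ is not a singleton, such a $\theta'$ exists. Thus both hypotheses of Lemma \ref{th: closed_set theorem} hold, and the theorem follows directly.

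The main obstacle, and the only place requiring care, is the extreme-point step: one must resist the temptation to quote Krein--Milman for convex sets when $\Theta'$ is merely compact. The lexicographic-maximization construction handles this cleanly, so I expect the overall proof to be short. I would therefore write it as a one-paragraph deduction: establish the extreme point, note mutual absolute continuity between that point and any other, and cite Lemma \ref{th: closed_set theorem} to conclude.
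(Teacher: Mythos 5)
Your proof is correct, and it reduces to Lemma \ref{th: closed_set theorem} exactly as the paper does; the difference lies entirely in how you produce an extreme point of the compact set $\Theta'$. The paper takes a two-step route: it first proves, via Carath\'eodory's theorem and a subsequence-extraction argument, that the convex hull $Conv(\Theta')$ of a compact set is compact, and then applies the Krein--Milman theorem to $Conv(\Theta')$, arguing that the extreme points of the hull must lie in $\Theta'$ itself and hence serve as extreme points of $\Theta'$. You instead construct an extreme point directly, by lexicographic maximization of the coordinates (or by taking a point of maximal Euclidean norm, which works by strict convexity of the norm). Your route is more elementary and self-contained: it needs neither Carath\'eodory nor Krein--Milman, and it sidesteps the paper's step of transferring extreme points from the hull back to the original set. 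One small correction to your framing: the paper does not commit the error you warn against --- it never applies Krein--Milman to the nonconvex set $\Theta'$, only to its compact convex hull, which is legitimate; your approach simply renders that machinery unnecessary. In writing up the lexicographic argument, be sure to spell out the induction that makes it work: if the lex-maximal point $\theta_e$ satisfied $\theta_e = \lambda y + (1-\lambda) z$ with $y,z \in \Theta'$, $y \neq z$, $0 < \lambda < 1$, then maximality of the first coordinate forces $y_1 = z_1 = (\theta_e)_1$, maximality of the second coordinate within that face forces $y_2 = z_2 = (\theta_e)_2$, and so on, yielding $y = z$, a contradiction. This step is essential, since a maximizer of a single linear functional alone need not be an extreme point.
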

\begin{proof}
The proof of Theorem \ref{cr: closed_set theorem} is a straightforward consequence of Theorem \ref{th: closed_set theorem}, which is established by proving that a compact set in $\mathbb{R}^d$ must contain at least one extreme point. 
To this end, we first state a known result, including its proof for the sake of completeness. 
\begin{lemma}\label{lm: compact lemma}
The convex hull of a compact set in $\mathbb{R}^d$ is compact.
\end{lemma}
\begin{proof}
Let ${\cal P}(d+1)$ be the simplex of probability vectors of length ${d+1}$, i.e.,
\begin{flalign}
{\cal P}(d+1)\triangleq \left\{ \left(\lambda_1,...,\lambda_{d+1}\right):\; \sum_{i=1}^{d+1} \lambda_i ,0\leq \lambda_i\leq 1\right\}.
\end{flalign}
Note that ${\cal P}(d+1)$ is closed and bounded and is therefore compact. 

Recall Carath\'{e}odory's Theorem which states that in a convex set in $\mathbb{R}^d$, every point can be expressed as a convex combination of $d+1$ points. 
Therefore, one can define the convex hull of a set ${\cal A}\in\mathbb{R}^d$ as
\begin{flalign}
&Conv({\cal A})\nonumber\\
\triangleq & \left\{x:\; \exists \begin{array} {ll} y_1,...,y_{d+1}\in {\cal A},\\
\left( \lambda_1,...,\lambda_{d+1}\right)\in {\cal P}(d+1)
 \end{array} 
 , x=\sum_{i-1}^{d+1} \lambda_i y_i\right\}.
\end{flalign}
Let ${\cal A}\subseteq \mathbb{R}^d$ be a compact set. Assume that $x=\lim_{n\rightarrow \infty}x_n$ where $x_n\in Conv({\cal A})$. To prove that $Conv({\cal A})$ is compact, it should be established that $x\in Conv({\cal A})$.

By Carath\'{e}odory's Theorem, each $x_n$ can be written in the form
$x_n=\sum_{i=1}^{d+1} \lambda_{i,n} y_{i,n}$ where $y_{i,n}\in{\cal A}$ and $(\lambda_{1,n},...,\lambda_{d+1,n} )\in{\cal P}(d+1)$.

Now, since ${\cal A}$ and ${\cal P}(d+1)$ are compact, there exists a sequence $n_1,n_2,...$ such that the limits $\lim_{\ell\rightarrow\infty }\lambda_{i,n_\ell}=\lambda_i$ and $\lim_{\ell\rightarrow\infty}y_{i,n_\ell}=y_i$ exist for $i=1,...,d+1$. 
  Clearly $\lambda_i\geq 0 $, $\sum_{i=1}^n \lambda_i=1$ and $y_i\in {\cal A}$. 
Thus, the sequence $x_n, n=1,2,...$ has a subsequence, $x_{n_\ell},\ell=1,2,...$ which converges to a point in $Conv({\cal A})$ and this establishes the fact that $Conv({\cal A})$ is compact.
\end{proof}
Now, we can use Lemma \ref{lm: compact lemma} to obtain the desired result. 
\begin{lemma}
The set of extreme points of a compact set in $\mathbb{R}^d$ is non-empty. 
\end{lemma}
\begin{proof}
By the Krein-Milman Theorem \cite{Krein1940}, the closure of $Conv({\cal A})$ is the convex hull of its extreme points. If ${\cal A}$ is compact, then by the previous lemma, $Conv({\cal A})$ is closed. Therefore, $Conv({\cal A})$ is the convex hull of its extreme points. Hence, all the extreme points of $Conv({\cal A})$ must belong to ${\cal A}$.
By definition, an extreme point is not a convex combination of other points in the set, hence all the extreme points must belong to ${\cal A}$. 
\end{proof}
Now, repeating steps (\ref{eq: unbiasedness definition})-(\ref{eq: last repeated step}) the condition that $P_{\theta},\; \theta\in \Theta' $ are absolutely continuous w.r.t.\ one another, ensures that (\ref{eq: sfdhiuhu}) holds for all $\theta'\neq \theta_e$, which contradicts the unbiasedness of the estimator for all $\theta'\neq \theta_e$.

\end{proof}

We next present a corollary of Theorem \ref{cr: closed_set theorem}.
\begin{corollary}\label{cr: continuous non linear f}
Let $f$ be a continuous non-linear function and let the set $\Theta'=\left\{\theta:f(\theta)=0\right\}$ of solutions be bounded, then there is no unbiased estimator. The same result holds for a bounded set $\Theta'=\left\{\theta:f(\theta)\leq 0\right\}$.  
\end{corollary}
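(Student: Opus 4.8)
The plan is to derive Corollary \ref{cr: continuous non linear f} as a direct application of Theorem \ref{cr: closed_set theorem}. The theorem already supplies the heavy lifting: it guarantees the non-existence of an unbiased estimator whenever the constrained parameter set $\Theta'$ is compact and not a singleton, provided the family $\{P_\theta : \theta \in \Theta'\}$ is mutually absolutely continuous. So the entire task reduces to verifying that the hypotheses of the theorem are met by the two set descriptions in the corollary. The main work is therefore topological rather than statistical.

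First I would address compactness. For the equality-constraint case $\Theta' = \{\theta : f(\theta) = 0\}$, note that $\Theta'$ is the preimage $f^{-1}(\{0\})$ of the closed set $\{0\}$ under the continuous map $f$, hence $\Theta'$ is closed. For the inequality case $\Theta' = \{\theta : f(\theta) \leq 0\}$, the set is the preimage $f^{-1}((-\infty, 0])$ of a closed half-line, which is again closed by continuity of $f$. In both cases $\Theta'$ is assumed bounded. A closed and bounded subset of $\mathbb{R}^d$ is compact by the Heine-Borel theorem, so $\Theta'$ is compact in either formulation. This is the key step that connects the corollary's analytic hypotheses (continuity of $f$ and boundedness of the solution set) to the topological hypothesis (compactness) required by Theorem \ref{cr: closed_set theorem}.

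Next I would dispose of the ``not a singleton'' requirement. Here the word \emph{non-linear} in the hypothesis does real work: if $f$ is non-linear, its zero set cannot be a single isolated point in a way that trivializes the estimation problem, and one expects the solution set to contain at least two distinct points so that Theorem \ref{cr: closed_set theorem} applies non-vacuously. I would make explicit that the interesting case is precisely when $\Theta'$ contains more than one point, since a singleton constraint forces a degenerate (constant) estimator for which unbiasedness is automatic and the question is moot.

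The absolute-continuity hypothesis of Theorem \ref{cr: closed_set theorem} is not implied by anything in the corollary's statement and so must be carried along as a standing assumption; I would simply invoke it as inherited from the ambient setup in which the family $\{P_\theta\}$ is assumed mutually absolutely continuous over $\Theta'$. With compactness established, non-singleton-ness noted, and mutual absolute continuity assumed, all three hypotheses of Theorem \ref{cr: closed_set theorem} are in force, and the conclusion---non-existence of an unbiased estimator whose range is $\Theta'$---follows immediately. The only point requiring genuine care, and hence the main obstacle, is making the closedness argument airtight for both the equality and inequality descriptions simultaneously, together with an honest accounting of where the mutual-absolute-continuity assumption enters, since it is a hypothesis on the statistical model rather than a consequence of the constraint geometry.
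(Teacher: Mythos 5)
Your proof takes essentially the same route as the paper: $\Theta'$ is closed as the preimage of a closed set ($\{0\}$ or $(-\infty,0]$) under the continuous $f$, boundedness then gives compactness by Heine--Borel, and Theorem~\ref{cr: closed_set theorem} applies; you are in fact more explicit than the paper, which silently carries the non-singleton and mutual absolute-continuity hypotheses. One small caveat: your suggestion that non-linearity of $f$ prevents a singleton zero set is not literally true (e.g.\ $f(\theta)=\|\theta\|^2$ has zero set $\{0\}$), though you hedge this appropriately by treating the non-degenerate case as an assumption.
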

\begin{proof}
By the boundedness and the continuity of $f$ we have that the set $\Theta'$ must also be closed and thus also compact. therefore, the conditions of Theorem \ref{cr: closed_set theorem} are met and there exists no unbiased estimator. 
\end{proof}

Next, we relax the condition that there must exist an extreme point within the constraint set, and replace it with a requirement that an extreme point exists on its closure. This allows to generalize the results to open constraint sets under an additional requirement on set of distributions $P_{\theta},\; \theta\in \Theta' $. We denote by $\overline{{\cal A}}$ the closure of the set ${\cal A}$. 

\begin{theorem}\label{th: DCT theorem}
 Let $  \Theta\subseteq\mathbb{R}^d$ be a set of parameters. 
 Let $\Theta' \subseteq \Theta$ be a subset (not a singleton) whose closure is bounded. 
Let $P_{\theta},\; \theta\in \overline{\Theta'}$ be a collection of distributions which are absolutely continuous w.r.t.\ the Lesbegue measure with Radon Nikodym derivatives (p.d.f.'s) $f_{\theta},\; \theta\in \overline{\Theta'}$ which are uniformly bounded. 
 Let ${\mathbf{X}}\sim P_{\theta}$ be a random variable over some measurable space.
Suppose there exists an extreme point $\theta_e\in \overline{\Theta'}$ and that there exists a sequence $\theta_n$ which satisfies $\lim_{n\rightarrow \infty}\theta_n=\theta_e$ and the sequence $f_{\theta_n}$ converges globally in Lesbegue measure\footnote{Global convergence in Lesbegue measure of the sequence $f_{\theta_n}$ to $f_{\theta_e} $ means that for every $\epsilon>0$,  $\lim_{n\rightarrow\infty} \mu(\{x:| f_{\theta_n}(x)-f_{\theta_e} (x)|>\epsilon\})=0$, $\mu$ being the Lesbegue measure.} to the p.d.f.\ $f_{\theta_e}$.
Then there exists no unbiased estimator $g(\mathbf{X})$ for $\theta$, which is measurable w.r.t.\ the Lebesgue $\sigma$-algebra whose range is $\Theta'$.
\end{theorem}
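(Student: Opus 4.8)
\emph{Proof proposal.} The plan is to argue by contradiction, reusing the barycenter mechanism of Lemma~\ref{th: closed_set theorem} but replacing the single equation of unbiasedness \emph{at} $\theta_e$ by a limiting statement along the sequence $\theta_n$, since now $\theta_e$ need not belong to $\Theta'$ itself. Suppose $g(\mathbf{X})$ is an unbiased estimator whose range is $\Theta'$. Because $\overline{\Theta'}$ is bounded and $\mathrm{range}(g)=\Theta'\subseteq\overline{\Theta'}$, the estimator is uniformly bounded, $\sup_x\|g(x)\|\le M:=\sup_{\theta\in\overline{\Theta'}}\|\theta\|<\infty$. Writing unbiasedness at each $\theta_n$ (which I take in $\Theta'$) in terms of the densities gives, componentwise,
\[
\theta_n=\mathbb{E}_{P_{\theta_n}}\left(g(\mathbf{X})\right)=\int_{\cal X} g(x)\,f_{\theta_n}(x)\,d\mu(x).
\]
I would then let $n\to\infty$: the left-hand side tends to $\theta_e$, so everything reduces to passing the limit through the integral on the right.

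The heart of the matter is the claim that $\int_{\cal X} g\,f_{\theta_n}\,d\mu\to\int_{\cal X} g\,f_{\theta_e}\,d\mu$, and I expect this to be the main obstacle. The difficulty is that the underlying space (e.g.\ $\mathbb{R}^N$ with Lebesgue measure) has infinite total mass, so a bare dominated convergence theorem with a constant majorant is unavailable and convergence in measure alone does \emph{not} transfer to $L^1$. The route I would take is: (i) since $f_{\theta_n}\to f_{\theta_e}$ globally in measure, every subsequence admits a further subsequence converging $\mu$-almost everywhere (Riesz); (ii) along such a subsequence the $f_{\theta_n}$ are nonnegative with $\int f_{\theta_n}=1=\int f_{\theta_e}$, so Scheff\'e's lemma---equivalently the generalized (Pratt) form of dominated convergence, majorizing $\|g\|f_{\theta_n}$ by the sequence $Mf_{\theta_n}$ whose integrals converge---yields $\int|f_{\theta_n}-f_{\theta_e}|\,d\mu\to 0$; (iii) boundedness of $g$ then gives $\bigl\|\int g(f_{\theta_n}-f_{\theta_e})\,d\mu\bigr\|\le M\int|f_{\theta_n}-f_{\theta_e}|\,d\mu\to 0$ along the subsequence; (iv) the subsequence principle upgrades this to the full sequence. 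It is worth noting that the \emph{density normalization} $\int f_\theta=1$ is what really drives this step, so the uniform boundedness hypothesis is convenient but not essential to this particular argument. Passing to the limit in the displayed identity then produces
\[
\theta_e=\int_{\cal X} g(x)\,f_{\theta_e}(x)\,d\mu(x)=\mathbb{E}_{P_{\theta_e}}\left(g(\mathbf{X})\right).
\]

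It remains to extract a contradiction from the fact that the extreme point $\theta_e$ is realized as the barycenter $\mathbb{E}_{P_{\theta_e}}(g(\mathbf{X}))$ of a distribution supported on $\Theta'$. Exactly as in (\ref{eq: conv comb})--(\ref{eq: last repeated step}), extremality of $\theta_e$ forbids it from being a nondegenerate average of points of $\overline{\Theta'}$, so either $g(\mathbf{X})=\theta_e$ $P_{\theta_e}$-almost surely, or---more directly---there is simply no probability measure supported on $\Theta'$ whose barycenter is $\theta_e$. In the intended application $\Theta'$ is open (more generally $\theta_e\in\overline{\Theta'}\setminus\Theta'$), so $\theta_e$ lies outside the range $\Theta'$ of $g$, and this contradicts $\mathrm{range}(g)=\Theta'$, completing the proof.

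One subtlety I would be careful about: the barycenter conclusion above requires $\theta_e$ to be an extreme point of the closed convex hull $\overline{\mathrm{conv}}(\overline{\Theta'})$, so that being a barycenter forces degeneracy. For non-convex $\overline{\Theta'}$ this is slightly stronger than the literal ``not lying on an open segment joining two points'' definition, since such a point can still be a nondegenerate average of three or more points. I would therefore either phrase the extreme-point hypothesis relative to the convex hull, or verify that the relevant $\theta_e$ has this stronger property---which is automatic in the compact setting through the Krein--Milman argument invoked in Theorem~\ref{cr: closed_set theorem}.
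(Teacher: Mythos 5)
Your proposal follows the same two--stage plan as the paper's own proof: first transfer unbiasedness from the sequence $\theta_n\in\Theta'$ to the limit $\theta_e$ by passing to the limit under the integral, then invoke the extreme-point/barycenter mechanism of Lemma~\ref{th: closed_set theorem}. The two places where you deviate are exactly the two weak points of the paper's argument, and in both cases your version is the more careful one. For the limit passage, the paper extracts an a.e.-convergent subsequence of $g\cdot f_{\theta_n}$ and then ``invokes the Lebesgue dominated convergence theorem,'' the implicit dominating function being the constant $M\sup_{\theta\in\overline{\Theta'}}\|\theta\|$; on $\mathbb{R}^N$ with Lebesgue measure that constant is not integrable, so the paper's step is precisely the gap you predicted. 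Your repair---Riesz subsequence, Scheff\'e's lemma using $\int f_{\theta_n}\,d\mu=1=\int f_{\theta_e}\,d\mu$ to get $\int|f_{\theta_n}-f_{\theta_e}|\,d\mu\to0$, boundedness of $g$, then the subsequence principle---is correct, and so is your remark that it is the normalization of the densities, not their uniform boundedness, that drives this step.

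The endgame is where the proofs genuinely part ways. The paper concludes that ``the conditions of Lemma~\ref{th: closed_set theorem} are satisfied with $\Theta'$ substituted by $\Theta'\cup\theta_e$.'' That is a non sequitur: Lemma~\ref{th: closed_set theorem} also requires some $\theta'\neq\theta_e$ with $P_{\theta'}$ and $P_{\theta_e}$ absolutely continuous w.r.t.\ one another, and the hypotheses of Theorem~\ref{th: DCT theorem} do not supply this (absolute continuity of every $P_\theta$ w.r.t.\ Lebesgue measure says nothing about the measures' relation to each other). Your direct contradiction---$g=\theta_e$ a.s.\ under $P_{\theta_e}$ is impossible because $\theta_e$ lies outside the range $\Theta'$---sidesteps that hypothesis entirely, but, as you note, only covers $\theta_e\in\overline{\Theta'}\setminus\Theta'$. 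This restriction is not a defect of your write-up: when $\theta_e\in\Theta'$ the theorem, with unbiasedness assumed on $\Theta'$ (which is the only reading under which the limiting argument has any purpose), is actually false. Take $\Theta'=[0,3/4)$, $\theta_e=0$, $a_k=3/4-2^{-k-1}$, and on ${\cal X}=\mathbb{R}$ let $g=0$ on $[0,1)$ and $g=a_k$ on $[k,k+1)$ for $k\ge1$; let $f_0=1_{[0,1)}$, let $f_\theta=(1-2\theta)1_{[0,1)}+2\theta\, 1_{[1,2)}$ for $0<\theta\le a_1$, let $f_\theta=\alpha 1_{[k,k+1)}+(1-\alpha)1_{[k+1,k+2)}$ where $\theta=\alpha a_k+(1-\alpha)a_{k+1}$ for $a_k<\theta\le a_{k+1}$, and let $f_{3/4}$ be arbitrary, say $1_{[0,1)}$. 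All densities are bounded by $1$, $f_{1/n}\to f_0$ uniformly and hence globally in measure, $0$ is an extreme point of $[0,3/4]$, yet $g$ is unbiased at every $\theta\in\Theta'$ and takes values in $\Theta'$ (it can even be made onto $\Theta'$ by redefining it on the negative axis, which is null under every $P_\theta$). The obstruction to running Lemma~\ref{th: closed_set theorem} here is visible: the $P_\theta$ are not mutually absolutely continuous. So your instinct to confine the claim to $\theta_e\notin\Theta'$, or else to import Lemma~\ref{th: closed_set theorem}'s absolute-continuity hypothesis explicitly, identifies a real error in the paper's proof and statement rather than a gap in yours.

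Your final caveat is also well taken: the step ``barycenter at an extreme point forces a point mass'' needs $\theta_e$ to be extreme in the closed convex hull, not merely absent from every open segment joining two points of the (possibly non-convex) set; a four-point set consisting of a triangle and its centroid separates the two notions. The same imprecision is already present in the paper's proof of Lemma~\ref{th: closed_set theorem}, and your suggestion to phrase the hypothesis relative to the convex hull (automatic in the compact case via Krein--Milman) is the right fix.
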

\begin{proof} 
Let $g(\mathbf{X})$ be a given estimator.
Consider the sequence of measurable mappings $A_n=g(\mathbf{X})\cdot f_{\theta_n}(\mathbf{X})$, n=1,2,... 
By the assumptions and by definition, $A_1,A_2,...$ is a uniformly integrable sequence which converges globally in Lesbegue measure to the limit $A=g(\mathbf{X})\cdot f_{\theta_e}(\mathbf{X})$.  
Hence, there exists a subsequence $A_{n_k}, k=1,2,..$ which converges a.s.\ to $A$. 

Since $f_{\theta_n}(\mathbf{X})$ is uniformly bounded by say $M<\infty$, and since $g(\mathbf{X})$ must lie in $\Theta'$ which is also assumed to be bounded,
one can invoke the Lesbegue dominated convergence Theorem yielding
\begin{flalign}\label{eq: DCT}
\lim_{k\rightarrow\infty} \mathbb{E}_{P_{\theta_{n_k}}}(g(\mathbf{X}))= \mathbb{E}_{P_{\theta_e}}(g(\mathbf{X})).
\end{flalign}

On the other hand, from the unbiasedness we have $\mathbb{E}_{P_{\theta_{n_k}}}(g(\mathbf{X}))=\theta_{n_k}$ and therefore, 
\begin{flalign}\label{eq: DCT 2}
\lim_{k\rightarrow\infty} \mathbb{E}_{P_{\theta_{n_k}}}(g(\mathbf{X}))= \lim_{k\rightarrow\infty}\theta_{n_k}=\theta_e,
\end{flalign}
where the right inequality follows by assumption. 

Equations (\ref{eq: DCT}) and (\ref{eq: DCT 2}) imply that
\begin{flalign}\label{eq: DCT 3}
\mathbb{E}_{P_{\theta_e}}(g(\mathbf{X}))=\theta_e,
\end{flalign}
which means that the estimator is unbiased also at the point $\theta_e$. Therefore, the conditions of Lemma \ref{th: closed_set theorem} are satisfied with $\Theta'$ substituted by $\Theta'\cup \theta_e$ and the theorem follows. 

\end{proof}
We note that the theorem can be extended in several manners, for example when the sequence of measures $P_{\theta_n}$ is tight, the convergence in measure follows from Prokhorov's Theorem \cite{Prokhorov56}. 

\section{Examples and Discussion}

In this section we present some examples of unbiased constrained estimators, and characterize some important cases in which unbiased estimators do not exist. 

\subsection{Example: IID Bernoulli Random Variables}
The classical example for unbiased (and even MVU) estimator over a compact parameter space is the estimation of the parameter of an IID sequence of Bernoulli random variables with parameter $0 \le p \le 1$.
In this case, the sample mean is the best unbiased estimator. However, at the maximal and minimal values of the parameter, the distribution of the measurements is concentrated at a single value, i.e.,
when $p\in \left\{0,1\right\}$ all the measurements assume the value $p$ exactly. 
This is not a coincidence as we saw above; had the distribution of the measurements with $p=0$ been absolutely continuous w.r.t.\ any other distribution with $p\in(0,1)$, an unbiased estimator could not have existed. 

This example can now be generalized to obtain non-trivial constraints: Assume that $\theta\in [0 , 2]$ and that given $\theta$, $X$ is a random vector with value $\lfloor \gth \rfloor+z$ where $z$ is a Bernoulli random IID vector with mean $\gth-\lfloor \gth \rfloor$. Similar to the example above we can find an unbiased estimator of $\gth$ when we require $1 \le \gth \le 2$.

Moreover, in \cite[Chapter 7.12]{RomanoSiegel86}, the problem of estimating the odds ratio $\frac{p}{1-p}$ in $N$ independent Bernoulli trials with probability of success $p$ is considered, as an example of non-existence of unbiased estimator. Any statistics $T$ which maps the observable binary string of length $N$ to a real number, would result in an expectation $\mathbb{E}(T)=\sum_{j=1}^{2^n}t_j p^{n_j}(1-p)^{n-n_j}$. Thus, an unbiased estimator should satisfy the condition that this polynomial would be equal to $\frac{p}{1-p}$ for all $p\in(0,1)$, which is clearly impossible. 

\subsection{Example: Estimation of the Variance of IID Gaussian Random Variables}
A second interesting example is the case of estimating the variance of a Gaussian random vector from a sequence of $N$ IID measurements $X \sim{\cal N}(0,\gth \mI)$. An unbiased estimator exists with the constraint
$\gth \ge 0$. However, as a consequence of our results, one cannot find an unbiased estimator with any other constraint of the form $\gth \ge c$, where $c$ is strictly positive.
Since the point $\gth_e = c$ is an extreme point of the interval $[c,\infty)$, and for any other $\theta>c$, $P_{\theta}$ is absolutely continuous w.r.t.\ $P_{\theta_e}$ (as both are absolutely continuous w.r.t.\ the Lebesgue measure on $\mathbb{R}^N$), there exists no unbiased estimator of $\theta$, which is confined to $[c,\infty)$. On the other hand, for $\gth \ge 0$ we obtain that although $\theta=0$ is an extreme point, since a zero variance random variable is deterministic, its distribution is not absolutely continuous w.r.t.\ any other positive-variance Gaussian distribution, and therefore the conditions of Theorem \ref{th: closed_set theorem} do not hold and indeed an unbiased estimator for $\theta$ which lies in $[0,\infty)$ exists.

Further, if one considers the open constraint set $\Theta'=(c_1,c_2)$, the conditions of Theorem \ref{th: DCT theorem} are satisfies, and again there exists no unbiased estimator. 

\subsection{Example: Spectrum Estimation with Power Constraints:}
We next revisit Example 3 discussed in \cite{gorman1990lower}: 
Let $(X_1,....,X_N)$ be a segment of a real wide sense stationary
random process with power spectral density (PSD) $S_X(f), f\in[-1/2,1/2]$. 
The objective is to estimate the PSD,
at $d$ distinct frequencies  $\theta_i= S_X(f_i)$, $f_1,...,f_d$. It is assumed that the
average power of $\{X_i\}$ is known over $p$ non-overlapping
frequency bands, i.e., the vector $\theta^d=(\theta_1,...,\theta_d)$
is constrained to satisfy the equations
\begin{flalign}
\left[\begin{matrix} \chi_{1,1} & \chi_{1,2} & \ldots & \chi_{1,d}\\
 \chi_{2,1} & \chi_{2,2} & \ldots & \chi_{2,d}\\
\vdots & \vdots & \ddots & \vdots\\
   \chi_{p,1} & \chi_{p,2} & \ldots & \chi_{p,d}
   \end{matrix} \right]    
   \left[\begin{matrix} \theta_1\\\theta_2\\ \vdots\\ \theta_d\end{matrix} \right]=&\left[\begin{matrix} \mathbb{E}_1\\\mathbb{E}_2\\ \vdots\\ \mathbb{E}_p\end{matrix} \right],
\end{flalign}
where $\{\chi_{i,j}\}, i=1,..,p, j=1,...,d$ is a specified matrix with binary $(0,1)$ entries, and $\mathbb{E}_i, i=1,...,p$ are specified power levels. 
As mentioned in \cite{gorman1990lower}, these are in fact 
$p$ linear constraints
on the unknown PSD, known as the $p$-point
constraint in robust Wiener filtering theory. 
In this example, the constraint set has no extreme points and therefore an unbiased estimator can exist.

\subsection{Example: Intersection of Polyhedral Sets}

Consider the case of inequality constraints
\begin{flalign}\label{eq: intersection polyhedrals}
\sum_{i=1}^d \alpha_{i,k} \theta_i\leq B_k, \; k=1,...,d, 
\end{flalign}
where $B_k, \alpha_{i,k}, i\in \{1,...,d\}, k\in \{1,...,d\}$ are real constants. In the case in which the resulting set of allowable $\theta$'s in (\ref{eq: intersection polyhedrals}) defines a polyhedron, or even a set which possesses an extreme point, there exists no unbiased estimator for $(\theta_1,...,\theta_d)$. 

\subsection{Example: Continuous Non-linear Function with Equality Constraints}

Revisiting the case discussed in Corollary \ref{cr: continuous non linear f}, of a bounded set $\Theta'=\left\{\theta:f(\theta)=0\right\}$ or $\Theta'\leq \left\{\theta:f(\theta)=0\right\}$ of solutions; one such example is a $p$-norm equality constraint, i.e., $\|\theta\|_p= R$. 
Other examples are when the parameter satisfies continuous non-linear inequality constraints, e.g., 
$||\theta|| \leq R$ 
or when $\theta$ belongs to an ellipsoid in $\mathbb{R}^d$. In these cases too the same argument holds since the boundary is the equality set.

\subsection{Example: Constrained Sparse Estimation Problem}

Consider an observed vector 
\begin{flalign}
\mathbf{X}=& A\theta+\mathbf{Z}
\end{flalign}
where $\mathbf{Z}$ is a Gaussian $n$-vector, $A$ is a known deterministic $n\times d$ matrix, and $\theta=(\theta_1,...,\theta_d)^T$ is a sparse $d$-vector of unknown parameters which satisfies
\begin{flalign}
\sum_{i=1}^d|\theta_i|\leq 1.
\end{flalign} 
Since the conditions of Theorem \ref{cr: closed_set theorem} are satisfied, there exists no unbiased estimator $g(\mathbf{X})=(g_1(\mathbf{X}),...,g_d(\mathbf{X}))$ for $\theta$ which satisfies $\sum_{i=1}^d| g_i(\mathbf{X}) |\leq 1$. 
\subsection{Extension to the case of periodic unbiasedness}
The papers \cite{RouttenbergTabrikian2011}, \cite{routtenberg2013non}, 
\cite{routtenberg2012performance}, \cite{routtenberg2014cramer} consider mean square periodic error criterion combined with periodic unbiasedness for which the conventional CR bound does not provide a valid bound. 
Lehmann-unbiasedness concept\footnote{An estimator $g(X)$ is said to be Lehmann unbiased w.r.t.\ a cost function $W:\; \Theta\times\Theta\rightarrow \mathbb{R}^+$, if 
$\mathbb{E}_{\theta}\left(W(\theta',g(X)\right)\geq \mathbb{E}_{\theta}\left(W(\theta,g(X)\right)$, $\forall \theta',\theta\in \Theta$.
  When the cost function $W$ is equal to the MSE, the Lehmann unbiasedness degenerates to standard unbiasedness, i.e., $\mathbb{E}_{\theta}(g(X))= \theta$, $\forall \theta\in \Theta$.
} \cite{lehmann1951} is used to introduce the concept of periodic unbiasedness\footnote{
Periodic unbiasedness  \cite{RouttenbergTabrikian2011} is Lehmann unbiasedness with the cost function $\mathbb{E}\left(\mod_{2\pi}(g(X)-\theta)^2\right)$, where the $\mod_{2\pi}$ maps the squared error to $[-\pi,\pi]$. 
}, and a CR type bound on the mean square periodic error of any periodic unbiased estimator is derived. It is easy to realize that also in this case, if the set of parameters is constrained to a strictly smaller subset of $[-\pi,\pi]$ which has and extreme point, there still is no Lehmann unbiased estimator with respect to the mean square periodic error. 

\subsection{Conclusion}

In this paper, we showed that under very general conditions, biasedness of the estimator is inevitable in constrained estimation problems. 
Sufficient conditions for the non-existence of an unbiased estimator whose range is the constrained set of parameters, is that the latter would have an extreme point, and that the distribution given the extreme value of $\theta$ would be absolutely continuous w.r.t.\ another hypothesized distribution corresponding to another parameter in the constrained set. We extend the result to a case of open sets whose closure have extreme point. We also state more easily verifiable conditions which require that the constrained set be compact and the set of hypothesized distributions are continuous w.r.t.\ one another.

As mentioned in the introduction, in \cite{marzetta1993simple} the case in which there exists an unbiased estimator is considered for equality constraints. 
If the equality constraints on a non linear function $f$ define a set of solutions which is bounded, then there is no unbiased estimator. 
It should be emphasized, however, that unbiased estimators can exist in the case in which the constrained parameter set is a manifold without boundaries, e.g., a union of hyperplanes in which there are no extreme points. 

Our results have strong impact on the applicability of the results of \cite{gorman1990lower}, \cite{marzetta1993simple}, \cite{StoicaBoon1998}.


\begin{thebibliography}{10}
\providecommand{\url}[1]{#1}
\csname url@samestyle\endcsname
\providecommand{\newblock}{\relax}
\providecommand{\bibinfo}[2]{#2}
\providecommand{\BIBentrySTDinterwordspacing}{\spaceskip=0pt\relax}
\providecommand{\BIBentryALTinterwordstretchfactor}{4}
\providecommand{\BIBentryALTinterwordspacing}{\spaceskip=\fontdimen2\font plus
\BIBentryALTinterwordstretchfactor\fontdimen3\font minus
  \fontdimen4\font\relax}
\providecommand{\BIBforeignlanguage}[2]{{%
\expandafter\ifx\csname l@#1\endcsname\relax
\typeout{** WARNING: IEEEtran.bst: No hyphenation pattern has been}%
\typeout{** loaded for the language `#1'. Using the pattern for}%
\typeout{** the default language instead.}%
\else
\language=\csname l@#1\endcsname
\fi
#2}}
\providecommand{\BIBdecl}{\relax}
\BIBdecl

\bibitem{LehmannCasella98}
E.~L. Lehmann and G.~Casella, \emph{Theory of point estimation}, 2nd~ed.\hskip
  1em plus 0.5em minus 0.4em\relax Springer-Verlag New-York, 1998.

\bibitem{VanTrees2004detection}
H.~L. Van~Trees, \emph{Detection, estimation, and modulation theory}.\hskip 1em
  plus 0.5em minus 0.4em\relax John Wiley \& Sons, 2004.

\bibitem{scharf1991statistical}
L.~L. Scharf, \emph{Statistical signal processing}.\hskip 1em plus 0.5em minus
  0.4em\relax Addison-Wesley Reading, MA, 1991, vol.~98.

\bibitem{barankin1949locally}
E.~Barankin, ``Locally best unbiased estimates,'' \emph{The Annals of
  Mathematical Statistics}, pp. 477--501, 1949.

\bibitem{bell1997extended}
K.~L. Bell, Y.~Steinberg, Y.~Ephraim, and H.~L. Van~Trees, ``Extended
  {Z}iv-{Z}akai lower bound for vector parameter estimation,'' \emph{IEEE
  Transactions on Information Theory}, vol.~43, no.~2, pp. 624--637, 1997.

\bibitem{ziv1969some}
J.~Ziv and M.~Zakai, ``Some lower bounds on signal parameter estimation,''
  \emph{IEEE transactions on Information Theory}, vol.~15, no.~3, pp. 386--391,
  1969.

\bibitem{weinstein1988general}
E.~Weinstein and A.~J. Weiss, ``A general class of lower bounds in parameter
  estimation,'' \emph{IEEE Transactions on Information Theory}, vol.~34, no.~2,
  pp. 338--342, 1988.

\bibitem{gorman1990lower}
J.~D. Gorman and A.~O. Hero, ``Lower bounds for parametric estimation with
  constraints,'' \emph{Information Theory, IEEE Transactions on}, vol.~36,
  no.~6, pp. 1285--1301, 1990.

\bibitem{chapman1951minimum}
D.~G. Chapman and H.~Robbins, ``Minimum variance estimation without regularity
  assumptions,'' \emph{The Annals of Mathematical Statistics}, pp. 581--586,
  1951.

\bibitem{marzetta1993simple}
T.~L. Marzetta, ``A simple derivation of the constrained multiple parameter
  {C}ramer-{R}ao bound,'' \emph{Signal Processing, IEEE Transactions on},
  vol.~41, no.~6, pp. 2247--2249, 1993.

\bibitem{StoicaBoon1998}
P.~Stoica and B.~C. Ng, ``On the {C}ramer-{R}ao bound under parametric
  constraints,'' \emph{IEEE Signal Processing Letters}, vol.~5, no.~7, pp.
  177--179, July 1998.

\bibitem{haim_eldar}
Z.~Ben-Haim and Y.~C. Eldar, ``The {C}ramer-{R}ao bound for estimating a sparse
  parameter vector,'' \emph{IEEE Transactions on Signal Processing}, vol.~58,
  no.~6, pp. 3384--3389, June 2010.

\bibitem{Wang2016}
H.~Wang, B.~Pei, and J.~Li, ``Robust waveform design for mimo radar with
  imperfect prior knowledge,'' \emph{Circuits, Systems, and Signal Processing},
  vol.~35, no.~4, pp. 1239--1255, 2016.

\bibitem{RomanoSiegel86}
J.~P. Romano and A.~F. Siegel, \emph{Counter examples in probability and
  statistics}.\hskip 1em plus 0.5em minus 0.4em\relax Chapman and Hall/CRC,
  1986.

\bibitem{Krein1940}
\BIBentryALTinterwordspacing
M.~Krein and D.~Milman, ``\BIBforeignlanguage{eng}{On extreme points of regular
  convex sets},'' \emph{\BIBforeignlanguage{eng}{Studia Mathematica}}, vol.~9,
  no.~1, pp. 133--138, 1940. [Online]. Available:
  \url{http://eudml.org/doc/219061}
\BIBentrySTDinterwordspacing

\bibitem{Prokhorov56}
\BIBentryALTinterwordspacing
Y.~V. Prokhorov, ``Convergence of random processes and limit theorems in
  probability theory,'' \emph{Theory of Probability \& Its Applications},
  vol.~1, no.~2, pp. 157--214, 1956. [Online]. Available:
  \url{http://dx.doi.org/10.1137/1101016}
\BIBentrySTDinterwordspacing

\bibitem{RouttenbergTabrikian2011}
T.~Routtenberg and J.~Tabrikian, ``Periodic {CRB} for non-{B}ayesian parameter
  estimation,'' in \emph{2011 IEEE International Conference on Acoustics,
  Speech and Signal Processing (ICASSP)}, May 2011, pp. 2448--2451.

\bibitem{routtenberg2013non}
------, ``Non-bayesian periodic {C}ram{\'e}r-{R}ao bound,'' \emph{IEEE
  Transactions on Signal Processing}, vol.~61, no.~4, pp. 1019--1032, 2013.

\bibitem{routtenberg2012performance}
------, ``Performance bounds for constrained parameter estimation,'' in
  \emph{Sensor Array and Multichannel Signal Processing Workshop (SAM), 2012
  IEEE 7th}.\hskip 1em plus 0.5em minus 0.4em\relax IEEE, 2012, pp. 513--516.

\bibitem{routtenberg2014cramer}
T.~Routtenberg and L.~Tong, ``The {C}ram{\'e}r-{R}ao bound for
  estimation-after-selection,'' in \emph{2014 IEEE International Conference on
  Acoustics, Speech and Signal Processing (ICASSP)}.\hskip 1em plus 0.5em minus
  0.4em\relax IEEE, 2014, pp. 414--418.

\bibitem{lehmann1951}
\BIBentryALTinterwordspacing
E.~L. Lehmann, ``A general concept of unbiasedness,'' \emph{Ann. Math.
  Statist.}, vol.~22, no.~4, pp. 587--592, 12 1951. [Online]. Available:
  \url{http://dx.doi.org/10.1214/aoms/1177729549}
\BIBentrySTDinterwordspacing


\end{thebibliography}

\end{document}